\tikzset{
  LabelStyle/.style = {minimum width = 2em, 
                        text = red, font = \bfseries },
  VertexStyle/.append style = { inner sep=2pt,
                                font = \Large\bfseries, fill},
  EdgeStyle/.append style = {->, bend left} }
\newtheorem{thm}{Theorem}[section]
\numberwithin{equation}{section} 
\numberwithin{figure}{thm} 
\theoremstyle{plain}
\newtheorem*{thm*}{Theorem}
\theoremstyle{definition}
\theoremstyle{plain}
\newtheorem{thm_A}{Theorem}
\newtheorem*{defn*}{Definition}
\theoremstyle{plain}
\theoremstyle{plain} 
\theoremstyle{plain}
\theoremstyle{definition}
\newtheorem{ex}[thm]{Example}
\theoremstyle{remark}
\theoremstyle{plain}
\theoremstyle{plain}
\theoremstyle{plain}
\newtheorem{lem}[thm]{Lemma}
\newtheorem*{lem*}{Lemma} 
\theoremstyle{definition}
\newtheorem{defn}[thm]{Definition}
\newtheorem*{acknowledgment*}{Addentum}
\theoremstyle{plain}
\newtheorem*{ex*}{Example}
\theoremstyle{plain}
\begin{document}
\pgfdeclarelayer{background}
\pgfsetlayers{background,main}
\title{About an extension of the Davenport-Rado result to the  Herzog-Sch\"onheim conjecture for free groups}
\author{Fabienne Chouraqui}

\date{}

\maketitle
\begin{abstract}
Let $G$ be a group and $H_1$,...,$H_s$ be subgroups of $G$ of  indices $d_1$,...,$d_s$ respectively. In 1974, M. Herzog and J. Sch\"onheim conjectured that if $\{H_i\alpha_i\}_{i=1}^{i=s}$,  $\alpha_i\in G$, is a coset partition of $G$, then $d_1$,..,$d_s$ cannot be distinct. We consider the  Herzog-Sch\"onheim conjecture for free groups of finite rank and propose a new approach, based on an extension of the  Davenport-Rado result for $G=\mathbb{Z}$.
\end{abstract}
\maketitle
\section{Introduction}
Let $G$ be a group and $H_1$,...,$H_s$ be subgroups of $G$.  If there exist  $\alpha_i\in G$ such that $G= \bigcup\limits_{i=1}^{i=s}H_i\alpha_i$, and the sets  $H_i\alpha_i$, $1 \leq i \leq s$,  are pairwise disjoint, then  $\{H_i\alpha_i\}_{i=1}^{i=s}$ is \emph{a coset partition of $G$}  (or a \emph{disjoint cover of $G$}). In this case,    all the subgroups  $H_1$,...,$H_s$ can be assumed to be of  finite index in  $G$ \cite{newman,korec}. We denote by $d_1$,...,$d_s$ the indices of $H_1$,...,$H_s$ respectively. The coset partition $\{H_i\alpha_i\}_{i=1}^{i=s}$ has  \emph{multiplicity} if $d_i=d_j$ for some $i \neq j$. 

\setlength\parindent{10pt} In 1974, M. Herzog and J. Sch\"onheim  conjectured that any coset partition of any group  $G$ has multiplicity. In the 1980's, in a series of papers,  M.A. Berger, A. Felzenbaum and A.S. Fraenkel studied  the Herzog-Sch\"onheim conjecture \cite{berger1, berger2,berger3} and in \cite{berger4} they proved the conjecture is true for the pyramidal groups, a subclass of the finite solvable groups. Coset partitions of finite groups with additional assumptions on the subgroups of the partition have been extensively studied. We refer to \cite{brodie,tomkinson1, tomkinson2,sun}. In \cite{schnabel}, the authors very recently proved that the conjecture is true for all groups of order less than $1440$. 

 In \cite{chou_hs}, we consider  free groups of finite rank and develop a new combinatorial approach to the problem, based on the machinery of covering spaces. The  fundamental group of the  bouquet with $n\geq 1$ leaves (or the wedge sum of $n$ circles),  $X$,  is $F_n$, the  free group of finite rank $n$. As  $X$ is a ``good'' space (connected, locally path connected and semilocally $1$-connected), $X$ has a  universal covering  which can be identified with the Cayley graph of $F_n$,  an infinite simplicial tree. Furthermore, there exists a  one-to-one correspondence between the subgroups of $F_n$ and the covering spaces (together with a chosen point) of $X$.  

 For any  subgroup $H$ of $F_n$ of finite index $d$, there exists  a $d$-sheeted covering space  $(\tilde{X}_H,p)$  with a fixed basepoint. The underlying graph of $(\tilde{X}_H,p)$ is a directed labelled graph with $d$ vertices. We call it  \emph{the  Schreier graph of $H$} and denote it by $\tilde{X}_{H}$. It can be seen also as  a finite complete bi-deterministic automaton; fixing the start and the end state at the basepoint, it recognises the set of elements in $H$.  It is   called \emph{the Schreier coset diagram  for $F_n$ relative to the subgroup  $H$} \cite[p.107]{stilwell} or  \emph{the Schreier automaton for $F_n$ relative to the subgroup $H$} \cite[p.102]{sims}. The $d$ vertices (or states) correspond to the $d$ right cosets of $H$,  each edge (or transition) $Hg \xrightarrow{a}Hga$, $g \in F_n$, $a$ a generator of $F_n$,  describes the right action of $a$ on  $Hg$. If we fix the start state at  $H$, the basepoint,  and the end state at another vertex  $Hg$, where $g$  denotes the label of some path from the start state to the end state, then this automaton recognises the set of elements in $Hg$ and we call it  \emph{the  Schreier automaton  of $Hg$} and denote it by $\tilde{X}_{Hg}$.\\

In general, for any  automaton $M$, with alphabet $\Sigma$, and $d$ states,  there exists a  square matrix $A$ of order $d\times d$, with $a_{ij}$ equal to the number of directed edges from  vertex $i$ to vertex $j$, $1\leq i,j\leq d$. This matrix is   non-negative and it is  called  \emph{the transition matrix} \cite{epstein-zwik}. If for every $1\leq i,j\leq d$, there exists $m \in \mathbb{Z}^+$ such that $(A^m)_{ij}>0$, the matrix is \emph{irreducible}. For an irreducible non-negative matrix  $A$,  \emph{the period of $A$} is  the gcd of all $m \in \mathbb{Z}^+$ such that $(A^m)_{ii} >0$ (for any $i$).  If  $i$ and $j$ denote respectively the start and end  states of $M$, then the number of  words of length $k$ (in the alphabet $\Sigma$) accepted by $M$ is $a_k=(A^k)_{ij}$. \emph{The generating function of $M$} is defined by  $p(z)=\sum\limits_{k=0}^{k=\infty}a_k\,z^k$.  It is a rational function: the fraction of two polynomials in $z$ with integer coefficients \cite{epstein-zwik}, \cite[p.575]{stanley}.\\

 The intuitive idea behind our  approach  in this paper is  as follows. Let $F_n=\langle \Sigma\rangle$, and $\Sigma^*$ the free monoid  generated by $\Sigma$. Let $\{H_i\alpha_i\}_{i=1}^{i=s}$ be a coset  partition of $F_n$  with $H_i<F_n$ of index $d_i>1$, $\alpha_i \in F_n$, $1 \leq i \leq s$. Let $\tilde{X}_{i}$ denote the  Schreier  automaton of $H_i\alpha_i$, with transition matrix $A_i$ and generating function  $p_i(z)$, $1 \leq i\leq s$. For each $\tilde{X}_{i}$,  $A_i$ is a non-negative irreducible matrix and $a_{i,k}$, $k \geq 0$, counts the  number of  words of length $k$ that belong to $H_i\alpha_i\cap \Sigma^*$.
 Since $F_n$ is the disjoint union of the sets  $\{H_i\alpha_i\}_{i=1}^{i=s}$, each element  in $\Sigma^*$ belongs to one and exactly one such set, so $n^k$, the number of  words of length $k$ in $\Sigma^*$, satisfies $n^k=\sum\limits_{i=1}^{i=s}a_{i,k}$, for every $k \geq 0$. So, $\sum\limits_{k=0}^{k=\infty}n^k\,z^k=\sum\limits_{i=1}^{i=s}p_i(z)$.  Using this kind of counting argument, we prove that there is a repetition of the maximal  period $h>1$ and in some cases we could prove there is a repetition of the index also.





 \begin{thm_A}\label{theo0}
 Let $F_n$ be the free group on $n \geq 1$ generators. Let $\{H_i\alpha_i\}_{i=1}^{i=s}$ be a coset  partition of $F_n$ with $H_i<F_n$ of index $d_i$, $\alpha_i \in F_n$, $1 \leq i \leq s$, and $1<d_1 \leq ...\leq d_s$.  Let $\tilde{X}_{i}$ denote the  Schreier  graph of $H_i$, with  transition matrix   $A_i$, and period $h_i \geq 1$, $1 \leq i\leq s$. Let $1 \leq k,m\leq s$.
\begin{enumerate}[(i)]
\item Assume $h_k>1$, where  $h_k=max\{h_i \mid 1 \leq i \leq s\}$.   Then there exists $j\neq k$ such that  $h_j=h_k$, that is there is a repetition of the maximal period. 
\item Let $h_{\ell}>1$, such that $h_{\ell}$ does not properly divide any other period $h_i$, $ 1 \leq i \leq s$.  Then there exists  $j\neq \ell$ such that  $h_j=h_{\ell}$.
\item  For every $h_i$, there exists  $j\neq i$ such that  either $h_i=h_j$ or $h_i\mid h_j$.
\end{enumerate}
  \end{thm_A} 

	If   $n=1$ in Theorem \ref{theo0}, $\{H_i\alpha_i\}_{i=1}^{i=s}$  is a coset  partition of $\mathbb{Z}$. A coset partition of $\mathbb{Z}$ is $\{d_i\mathbb{Z} +r_i\}_{i=1}^{i=s}$, $r_i \in \mathbb{Z}$,  with  each $d_i\mathbb{Z} +r_i$ the residue class of $r_i$ modulo $d_i$. These coset partitions of $\mathbb{Z}$ were first introduced by P. Erd\H{o}s \cite{erdos1} and he conjectured that if $\{d_i\mathbb{Z} +r_i\}_{i=1}^{i=s}$, $r_i \in \mathbb{Z}$, is a  coset partition of $\mathbb{Z}$, then  the largest index $d_s$ appears at least twice.  Erd\H{o}s' conjecture was proved  by H. Davenport and R.Rado,  and independently by  L. Mirsky and  D. Newman (not published) using analysis of complex function \cite{erdos2,newman,znam}. Furthermore, it was proved that  the largest index $d_s$ appears at least $p$ times,  where $p$ is the smallest prime dividing $d_s$ \cite{newman,znam,sun2}, that each index $d_i$  divides another index $d_j$, $j\neq i$, and  that each index $d_k$ that does not properly divide any other index  appears at least twice \cite{znam}. We refer to \cite{ginosar} for a recent proof.\\
	
	With Theorem \ref{theo0}, we recover the  Davenport-Rado result (or Mirsky-Newman result) for the Erd\H{o}s' conjecture and some of its consequences. Indeed, for every index $d$, the Schreier graph of $d\mathbb{Z}$ has a transition matrix with period equal to $d$, so a repetition of the period is equivalent to a repetition of the index. For the  unique subgroup $H$ of $\mathbb{Z}$ of  index $d$,  its Schreier graph  $\tilde{X}_{H}$ is a closed directed path of length $d$ (with each edge labelled $1$). So, its  transition matrix   $A$ is the permutation matrix corresponding to the $d-$cycle $(1,2,...,d)$, and it has   period $d$.  In particular, the period of $A_s$ is $d_s$, and  there exists $j\neq s$ such that $d_j=d_s$.  Also, if the period (index) $d_k$ of $A_k$ does not properly divide any other period (index),  then  there exists  $j\neq k$ such that $d_j=d_k$. \\
	
	For the free groups in general, we prove that in some cases, the repetition of the period implies the repetition of the index. More precisely:

 \begin{thm_A}\label{theo1}
 Let $F_n$ be the free group on $n \geq 1$ generators. Let $\{H_i\alpha_i\}_{i=1}^{i=s}$ be a coset  partition of $F_n$ with $H_i<F_n$ of index $d_i$, $\alpha_i \in F_n$, $1 \leq i \leq s$, and $1<d_1 \leq ...\leq d_s$.  Let $\tilde{X}_{i}$ denote the  Schreier  graph of $H_i$, with  transition matrix   $A_i$, and period $h_i \geq 1$, $1 \leq i\leq s$. 
\begin{enumerate}[(i)]
\item If the period of $A_s$ is $d_s$, then $\{H_i\alpha_i\}_{i=1}^{i=s}$ has multiplicity and there exists  $j\neq s$ such that $d_j=d_s$. 
 
\item Let $h>1$, where  $h$ is either equal to $max\{h_i \mid 1 \leq i \leq s\}$ or such that $h$ does not properly divide any other period. Let $J=\{j \,\mid\, 1 \leq j\leq s,\,h_j=h\}$. Let $k \in J$ such that $d_k=max\{d_j \mid j \in J\}$.  If the period of $A_k$ is $d_k$, then   $\{H_i\alpha_i\}_{i=1}^{i=s}$ has multiplicity with  $j \in J$,  $j\neq k$ such that $d_j=d_k$. 
\end{enumerate}
  \end{thm_A} 

    The paper is organized as follows.  In  Section $1$, we give some preliminaries on automatons and their growth functions. In Section $2$, we prove the main result.    We also  refer to \cite{chou_hs} for more preliminaries and examples: Section 2, for free groups and covering spaces and   Section 3.1, for graphs.    
                    
 \section{Premilinaries on Automata}
\subsection{Automata and generating  function of their language}\label{subsec_automat}
We refer the reader to \cite[p.96]{sims}, \cite[p.7]{epstein}, \cite{pin,pin2}, \cite{epstein-zwik}.
A \emph{finite state automaton} is a quintuple $(S,\Sigma,\mu,Y,s_0)$, where $S$ is a finite set, called the \emph{state set}, $\Sigma$ is a finite set, called the \emph{alphabet}, $\mu:S\times \Sigma \rightarrow S$ is a function, called the \emph{transition function}, $Y$ is a (possibly empty) subset of $S$ called the \emph{accept (or end) states}, and $s_0$ is called the \emph{start state}.  It is a directed  graph with vertices the states and each transition $s \xrightarrow{a} s'$ between states $s$ and $s'$ is an edge with label $a \in \Sigma$. The \emph{label of a path $p$} of length $n$  is the product $a_1a_2..a_n$ of the labels of the edges of $p$.
The  finite state automaton $M=(S,\Sigma,\mu,Y,s_0)$ is \emph{deterministic} if there is only one initial state and each state is the source of exactly one arrow with any given label from  $\Sigma$. In a deterministic automaton, a path is determined by its starting point and its label \cite[p.105]{sims}. It is \emph{co-deterministic} if there is only one final state and each state is the target of exactly one arrow with any given label from  $\Sigma$. The  automaton $M=(S,\Sigma,\mu,Y,s_0)$ is \emph{bi-deterministic} if it is both deterministic and co-deterministic. An automaton $M$ is \emph{complete} if for each state $s\in S$ and for each $a \in \Sigma$, there is exactly one edge from $s$ labelled $a$.
\begin{defn}
Let $M=(S,\Sigma,\mu,Y,s_0)$ be  a finite state automaton. Let $\Sigma^*$ be the free monoid generated by $\Sigma$. Let  $\operatorname{Map}(S,S)$ be  the monoid consisting of all maps from $S$ to $S$. The map $\phi: \Sigma \rightarrow \operatorname{Map}(S,S) $ given by $\mu$ can be extended in a unique way to a monoid homomorphism $\phi: \Sigma^* \rightarrow \operatorname{Map}(S,S)$. The range of this map is a monoid called \emph{the transition monoid of $M$}, which is generated by $\{\phi(a)\mid a\in \Sigma\}$. An element $w \in \Sigma^*$ is \emph{accepted} by $M$ if the corresponding element of $\operatorname{Map}(S,S)$, $\phi(w)$,  takes $s_0$ to an element of the accept states set $Y$. The set $ L\subseteq \Sigma^*$  recognized by $M$ is called \emph{the language accepted by $M$}, denoted by $L(M)$.
\end{defn}
For any directed  graph with $d$ vertices or any finite state  automaton $M$, with alphabet $\Sigma$, and $d$ states,  there exists a  square matrix $A$ of order $d\times d$, with $a_{ij}$ equal to the number of directed edges from  vertex $i$ to vertex $j$, $1\leq i,j\leq d$. This matrix is   non-negative and it is  called  \emph{the transition matrix} (as in \cite{epstein-zwik}) or  \emph{the adjacency matrix} (as in \cite[p.575]{stanley}). For any $k \geq 1$, $(A^k)_{ij}$ is  equal to the number of directed paths of length $k$   from  vertex $i$ to vertex $j$. If for every $1\leq i,j\leq d$, there exists $m \in \mathbb{Z}^+$ such that $(A^m)_{ij}>0$, the matrix is \emph{irreducible}. For $A$  an irreducible non-negative matrix, \emph{the period of $A$} is  the gcd of all $m \in \mathbb{Z}^+$ such that $(A^m)_{ii} >0$ (for any $i$). If the period is $1$, A is \emph{aperiodic}.  \\

Let  $M$ be a bi-deterministic automaton  with alphabet $\Sigma$, $d$ states, start state $i$, accept state  $f$  and  transition matrix $A$. Let   $a_k=(A^k)_{if}$, the number of  words of length $k$ in the free monoid $\Sigma^*$,  accepted by $M$. The function  $p_{i  f}(z)=\sum\limits_{k=0}^{k=\infty}a_k\,z^k\,$ is called \emph{the generating function of $M$} \cite[p.574]{stanley}. 
\begin{thm}\cite[p.574]{stanley}\label{theo_genfn_stanley}
The generating function $p_{if}(z)$ is given by
\[p_{if}(z)=\frac{(-1)^{ij+f}det(I-zA:f,i)}{det(I-zA)}\]
where $(B : f, i)$ denotes the matrix obtained by removing the $f$th row and $i$th column of B, $det(I-zA)$ is the reciprocal polynomial of the characteristic polynomial of $A$.
Thus in particular $p_{if}(z)$ is a rational function whose degree is strictly less than the
algebraic multiplicity  of $0$ as an eigenvalue of $A$.
\end{thm}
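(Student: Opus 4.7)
The plan is to identify $p_{if}(z)$ with the $(i,f)$-entry of the matrix $(I-zA)^{-1}$, viewed over the ring of formal power series, and then invoke the adjugate (Cramer) formula. Since $I-zA$ reduces to $I$ at $z=0$, it is a unit in $\mathrm{Mat}_d(\mathbb{Z}[[z]])$; summing the geometric series gives
\[
(I-zA)^{-1}=\sum_{k=0}^{\infty} z^{k} A^{k},
\]
so the $(i,f)$-entry equals $\sum_{k\ge 0}(A^{k})_{if}\,z^{k}=p_{if}(z)$ by definition of the generating function.

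Next I would apply the adjugate formula entrywise. For any invertible matrix $B$ one has $B^{-1}=\det(B)^{-1}\operatorname{adj}(B)$, and since the adjugate is the transpose of the cofactor matrix, $[\operatorname{adj}(B)]_{if}=(-1)^{i+f}\det(B:f,i)$, where $(B:f,i)$ is obtained from $B$ by deleting row $f$ and column $i$. Applying this to $B=I-zA$ produces precisely the claimed formula
\[
p_{if}(z)=\frac{(-1)^{i+f}\,\det\!\bigl((I-zA):f,i\bigr)}{\det(I-zA)}.
\]
Both numerator and denominator lie in $\mathbb{Z}[z]$, so $p_{if}(z)$ is a rational function with integer coefficients.

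For the degree statement, factor the characteristic polynomial of $A$ over an algebraic closure as $\chi_A(\lambda)=\prod_{j=1}^{d}(\lambda-\lambda_j)$. A short computation (or the identity $\det(I-zA)=z^{d}\chi_A(1/z)$ for $z\ne 0$, extended by polynomiality) yields
\[
\det(I-zA)=\prod_{j=1}^{d}(1-\lambda_j z),
\]
so this polynomial has degree equal to the number of nonzero eigenvalues of $A$, namely $d-m_{0}$, where $m_{0}$ is the algebraic multiplicity of $0$ as an eigenvalue of $A$. On the other hand the numerator $\det((I-zA):f,i)$ is the determinant of a $(d-1)\times(d-1)$ matrix whose entries are polynomials of degree at most $1$ in $z$, so its $z$-degree is at most $d-1$. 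Therefore
\[
\deg p_{if}(z)\;\le\;(d-1)-(d-m_{0})\;=\;m_{0}-1\;<\;m_{0},
\]
which is exactly the asserted bound.

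The only real obstacle is bookkeeping: one must keep the adjugate transpose straight, so that removing row $f$ and column $i$ — rather than row $i$ and column $f$ — truly produces the $(i,f)$-entry of the inverse, and one must justify the passage from the matrix geometric series in $\mathbb{Z}[[z]]$ to the rational identity in $\mathbb{Q}(z)$ (this is automatic because both sides lie in $\mathbb{Z}[[z]]$ and agree as formal power series). Neither point presents any genuine difficulty, and no tool beyond linear algebra over a commutative ring is required.
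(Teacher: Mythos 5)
Your proof is correct and is the standard argument (geometric series identification of $p_{if}(z)$ with the $(i,f)$-entry of $(I-zA)^{-1}$, followed by the adjugate formula and the degree count via $\det(I-zA)=\prod_j(1-\lambda_j z)$); the paper itself gives no proof of this statement, citing Stanley, and your derivation matches the one in that reference. One point worth flagging: your sign $(-1)^{i+f}$ is the correct one (and agrees with Stanley), whereas the exponent $ij+f$ printed in the theorem statement is evidently a typographical error for $i+f$.
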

\subsection{The Schreier automaton of a coset of a subgroup of $F_n$}

We now introduce the particular automata we are interested in, that is \emph{the Schreier coset diagram  for $F_n$ relative to the subgroup  $H$} \cite[p.107]{stilwell} or  \emph{the Schreier automaton for $F_n$ relative to the subgroup $H$} \cite[p.102]{sims}. 
\begin{defn}\label{def_Schreier-graph}
 Let $F_n=\langle\Sigma\rangle$ and  $\Sigma^*$ the free monoid generated by $\Sigma$.  Let $H<F_n$ of index $d$. Let $(\tilde{X}_H,p)$ be the covering  of the $n$-leaves bouquet with basepoint $\tilde{x}_1$ and  vertices  $\tilde{x}_1, \tilde{x}_2,...,\tilde{x}_{d}$. Let  $t_i \in \Sigma^*$ denote the label of a path from $\tilde{x}_1$ to $\tilde{x}_i$. Let $\mathscr{T}=\{1, t_i\mid 1 \leq i\leq d\}$.  Let $\tilde{X}_H$ be the Schreier coset diagram  for $F_n$ relative to the subgroup  $H$,  with $\tilde{x}_1$ representing the subgroup $H$ and the other vertices $\tilde{x}_2,...,\tilde{x}_{d}$ representing the cosets  $Ht_i$ accordingly.  We call $\tilde{X}_{H}$  \emph{the  Schreier graph  of $H$},  with this correspondence between the vertices  $\tilde{x}_1, \tilde{x}_2,...,\tilde{x}_{d}$ and the cosets  $Ht_i$ accordingly.  
   \end{defn}

  From its definition, $\tilde{X}_{H}$  is  a strongly-connected graph (i.e any two vertices are connected by a directed path), so its transition  matrix $A$ is non-negative and irreducible. As $\tilde{X}_{H}$  is a directed $n$-regular graph, the sum of the elements at each row and at each column of  $A$ is equal to $n$. So, from the Perron-Frobenius result for non-negative irreducible matrices, $n$ is the \emph{Perron-Frobenius eigenvalue} of $A$, that is the positive real eigenvalue with maximal absolute value. If $A$ has period $h\geq 1$, then  $A$ is similar to the matrix $Ae^{\frac{2\pi i}{h}}$, that is the set $\{\lambda e^{\frac{2\pi ik}{h}}\mid 0 \leq k\leq h-1\}$ is a set of eigenvalues of $A$, for each  eigenvalue $\lambda$ of $A$. In particular,  $\{n e^{\frac{2\pi ik}{h}}\mid 0 \leq k\leq h-1\}$ is a set of simple eigenvalues of $A$ \cite[Ch.16]{bellman}.
  \begin{thm}\cite[Thm. V7]{flajolet}\label{theo_genfn_flajolet}
  Let $A$ be non-negative and irreducible matrix of order $d\times d$. Let $P(z)=(I-zA)^{-1}$. Then all the entries $p_{if}(z)$ of $P(z)$ have the same radius of convergence $\frac{1}{\lambda_{PF}}$, where $\lambda_{PF}$ is the Perron-Frobenius eigenvalue of $A$.
  \end{thm}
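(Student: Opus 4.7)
The plan is to compute the radius of convergence of each entry
\[p_{if}(z)=\sum_{k=0}^{\infty}(A^{k})_{if}\,z^{k}\]
directly by the Cauchy--Hadamard formula. Since $A$ is non-negative, all the coefficients $(A^{k})_{if}$ are non-negative reals, so it suffices to establish
\[\limsup_{k\to\infty}\bigl((A^{k})_{if}\bigr)^{1/k}\;=\;\lambda_{PF}\]
for every pair $i,f$, which by Cauchy--Hadamard gives radius of convergence exactly $1/\lambda_{PF}$.

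For the upper bound I would invoke Gelfand's formula $\|A^{k}\|^{1/k}\to\rho(A)$ together with the Perron--Frobenius identification $\rho(A)=\lambda_{PF}$, valid for any non-negative irreducible matrix. Since $(A^{k})_{if}\le\|A^{k}\|_{\infty}$, this immediately gives $\limsup_{k}((A^{k})_{if})^{1/k}\le\lambda_{PF}$, and so each $p_{if}$ has radius of convergence at least $1/\lambda_{PF}$.

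For the lower bound I would exploit a positive right Perron--Frobenius eigenvector $v>0$ satisfying $Av=\lambda_{PF}v$. Iterating yields $\sum_{j}(A^{k})_{ij}v_{j}=\lambda_{PF}^{k}v_{i}$, and pigeonhole then produces, for each $k$, an index $j_{k}\in\{1,\dots,d\}$ with $(A^{k})_{i,j_{k}}\ge\lambda_{PF}^{k}v_{i}/(d\,v_{\max})$. Because $j_{k}$ takes only finitely many values there is an infinite subsequence $K\subseteq\mathbb{N}$ along which $j_{k}$ equals a single state $j_{0}$. Irreducibility supplies an integer $m$ with $(A^{m})_{j_{0},f}>0$, and concatenating walks gives
\[(A^{k+m})_{i,f}\;\ge\;(A^{k})_{i,j_{0}}\,(A^{m})_{j_{0},f}\;\ge\;c\,\lambda_{PF}^{k}\qquad(k\in K)\]
for some $c>0$ independent of $k$. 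Taking $(k+m)$-th roots along $K$ yields $\limsup_{n}((A^{n})_{if})^{1/n}\ge\lambda_{PF}$, matching the upper bound.

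I expect the main obstacle to be the periodic case. When $A$ has period $h>1$, the entries $(A^{k})_{if}$ vanish on certain residue classes modulo $h$, so $(A^{k})_{if}/\lambda_{PF}^{k}$ does not converge and one cannot simply appeal to the positive-limit form of Perron--Frobenius available in the aperiodic case. Working with $\limsup$ and routing the lower bound through an auxiliary intermediate state $j_{0}$, selected by pigeonhole from a Perron eigenvector identity, is what finesses this difficulty; in the aperiodic case the conclusion follows more directly from the convergence $A^{k}/\lambda_{PF}^{k}\to v u^{T}/(u^{T}v)>0$, whose positive entries instantly give both bounds.
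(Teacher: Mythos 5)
Your proof is correct. Note, however, that the paper does not prove this statement at all: it is quoted verbatim from Flajolet--Sedgewick (Thm.~V.7) and used as a black box, so there is no internal proof to compare against. What you have produced is a complete, elementary, self-contained derivation: the upper bound $\limsup_k((A^k)_{if})^{1/k}\le\rho(A)=\lambda_{PF}$ via Gelfand's formula and the entrywise bound $(A^k)_{if}\le\norm{A^k}_{\infty}$, and the matching lower bound via the Perron eigenvector identity $\sum_j (A^k)_{ij}v_j=\lambda_{PF}^k v_i$, pigeonhole to extract a recurrent intermediate state $j_0$, and irreducibility to route from $j_0$ to $f$ in a fixed number $m$ of steps. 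The concatenation inequality $(A^{k+m})_{if}\ge (A^k)_{ij_0}(A^m)_{j_0f}$ and the limit $c^{1/n}\lambda_{PF}^{1-m/n}\to\lambda_{PF}$ along the subsequence are both sound, and combined with Cauchy--Hadamard they give radius exactly $1/\lambda_{PF}$ for every entry. Your closing remark is also the right one to make in the context of this paper: since the whole point here is that the transition matrices have periods $h_i>1$, the entries $(A^k)_{if}$ do vanish on whole residue classes modulo $h$, so the $\limsup$ formulation (rather than a genuine limit of $(A^k)_{if}/\lambda_{PF}^k$) is not a pedantic choice but a necessary one; your subsequence argument handles it cleanly. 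The only cosmetic caveat is the degenerate case $\lambda_{PF}=0$ (the $1\times 1$ zero matrix), which is excluded in all applications here since $\lambda_{PF}=n\ge 1$.
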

\begin{defn}\label{def_Schreier-automaton}
 Let $F_n=\langle\Sigma\rangle$ and $\Sigma^*$ the free monoid generated by $\Sigma$.  Let $H<F_n$ of index $d$. Let  $\tilde{X}_{H}$  be the  Schreier graph  of $H$. Using the notation from Defn. \ref{def_Schreier-graph}, let $\tilde{x}_1$ be the start state and $\tilde{x}_f$ be the end state for some $1 \leq f \leq d$. We call  the automaton obtained  \emph{the  Schreier automaton   of $Ht_f$}   and denote it by  $\tilde{X}_{Ht_f}$. The language accepted by $\tilde{X}_{Ht_f}$ is  the set of elements in $\Sigma^*$  that belong to $Ht_f$. We call the elements in  $\Sigma^*\cap Ht_f$,  \emph{the positive words in $Ht_f$}. The identity may belong to  this set.
   \end{defn} 
In contrast with our approach in \cite{chou_hs}, as we are interested here in counting positive words of a given length, we do  not add the inverses of the generators from $\Sigma$ to the alphabet.
\begin{ex}\label{ex_automaton_index3}
 Let  $\Sigma=\{a,b\}$ be an alphabet. Let $\Sigma^*$ be the free monoid generated by $\Sigma$. Let  $F_2=\langle a, b \rangle$. 
Let $H= \langle a^4,b^4,ab^{-1},a^2b^{-2},a^3b^{-3} \rangle$ be a subgroup of index $4$ in $F_2$. Let $\tilde{X}_H$ be the Schreier graph of $H$: 
\begin{figure}[H] 
   \centering \scalebox{0.9}[0.8]{\begin{tikzpicture}
   \SetGraphUnit{4}
    \tikzset{VertexStyle/.append  style={fill}}
     \Vertex[L=$H$, x=-3,y=0]{A}
     \Vertex[L=$Ha$, x=0, y=0]{B}
   
   \Vertex[L=$Ha^{2}$, x=3, y=0]{C}
    \Vertex[L=$Ha^{3}$, x=6, y=0]{D}
   \Edge[label = a, labelstyle = above](A)(B)
    \Edge[label = a, labelstyle = above](B)(C)
     \Edge[label = a, labelstyle = above](C)(D)
    \Edge[label = b, labelstyle = below](D)(A)
 \tikzset{EdgeStyle/.style = {->}}    
  \Edge[label =b, labelstyle = below](A)(B)
 \Edge[label = b, labelstyle = below](B)(C)
   \Edge[label = b, labelstyle = below](C)(D)

\tikzset{EdgeStyle/.style = {->, bend right}} 
\Edge[label = a, labelstyle = above](D)(A)
  
 \end{tikzpicture}}

 \caption{The Schreier graph $\tilde{X}_H$  of $H= \langle a^4,b^4,ab^{-1},a^2b^{-2},a^3b^{-3} \rangle$.}
\label{fig_aut2}
\end{figure}
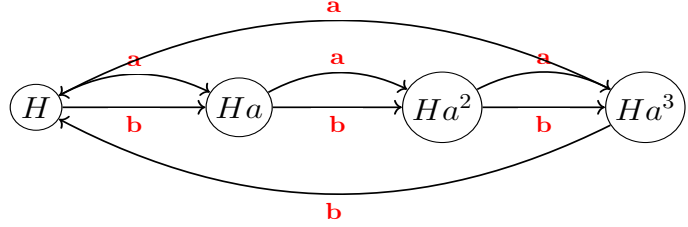The transition matrix of $\tilde{X}_H$ is 
$ \left( \begin{array}{cccc}
                       0 & 2 & 0 & 0 \\
                        0 & 0 & 2 & 0\\
                        0 &  0 & 0 & 2\\
						2 & 0  & 0 & 0\\

                       \end{array} \right)$
and its period is $4$. If $H$ is both the start and accept state, then the language accepted,  $L$,   is  the set of elements in $\Sigma^*$  that belong to $H$, that is the set of  positive words in $H$. The generating function is then $p(z)=\frac{1}{1-16z^4}$, with $p(0)=1$, since it contains $1$. If $H$ is the start state and $Ha$ the  accept state, then $L$ is  the set of positive words in the coset $Ha$, and $p(z)=\frac{2z}{1-16z^4}$. 
\end{ex}


\section{The generating functions of Schreier automata}
\subsection{Properties of the Schreier automaton}
Let $H < F_n$ of index $d$. We prove some properties of its Schreier automaton,   its  transition matrix  and its generating function.

\begin{lem}\label{lem_1-nz}
 Let $H < F_n$ of index $d$, with Schreier graph $\tilde{X}_H$ and transition matrix $A$ with period $h\geq1$. Let $\lambda$ be  a non-zero  eigenvalue of $A$ of algebraic multiplicity $n_{\lambda}$. Then   $\frac{1}{\lambda}$ is a  pole of $\frac{1}{det(I-zA)}$ of order  $n_{\lambda}$.  Moreover,   $\{\frac{1}{n} e^{\frac{2\pi im}{h}}\mid 0 \leq m\leq h-1\}$ is a set of simple poles of $\frac{1}{det(I-zA)}$ of minimal absolute value. 
\end{lem}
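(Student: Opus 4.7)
The plan is to reduce the lemma to the factorization of $\det(I-zA)$ over the spectrum of $A$ together with the Perron--Frobenius description of that spectrum already recalled in the excerpt. The starting point is the identity
\[
\det(I-zA)=\prod_{i=1}^{d}(1-z\lambda_i),
\]
where $\lambda_1,\ldots,\lambda_d$ are the eigenvalues of $A$ listed with algebraic multiplicity; this is immediate after triangularising $A$ over $\mathbb{C}$. A zero eigenvalue contributes the trivial factor $1$, while a nonzero eigenvalue $\lambda$ of algebraic multiplicity $n_\lambda$ contributes $(1-z\lambda)^{n_\lambda}=(-\lambda)^{n_\lambda}(z-\lambda^{-1})^{n_\lambda}$. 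Hence $z=\lambda^{-1}$ is a zero of $\det(I-zA)$ of order exactly $n_\lambda$, and therefore a pole of $1/\det(I-zA)$ of the same order. This gives the first assertion.

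For the second assertion I would invoke the Perron--Frobenius facts stated just before the lemma. Since $\tilde{X}_H$ is strongly connected and $n$-regular, $A$ is non-negative and irreducible with all row and column sums equal to $n$, so $n$ is the Perron--Frobenius eigenvalue. Moreover, as $A$ has period $h$, the whole peripheral spectrum $\{n e^{2\pi im/h}\mid 0\le m\le h-1\}$ consists of simple eigenvalues of $A$, and every remaining eigenvalue has modulus strictly less than $n$, by the Perron--Frobenius theorem in the irreducible periodic case (\cite[Ch.~16]{bellman}, as cited in the excerpt). Passing to reciprocals via the first part, and noting that $\{e^{-2\pi im/h}\mid 0\le m\le h-1\}$ coincides with $\{e^{2\pi im/h}\mid 0\le m\le h-1\}$ as $m$ runs through residues modulo $h$, the set $\{n^{-1}e^{2\pi im/h}\mid 0\le m\le h-1\}$ consists of simple poles of $1/\det(I-zA)$, and every other pole has modulus strictly greater than $1/n$.

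The argument is essentially bookkeeping, so I do not expect a substantial obstacle. The only points that require care are (i) matching the algebraic multiplicity of $\lambda$ as an eigenvalue of $A$ with the order of the zero of $\det(I-zA)$ at $\lambda^{-1}$, making sure the zero eigenvalue contributes nothing spurious to the pole set; and (ii) verifying that the set stated in the lemma genuinely equals the reciprocals of the peripheral eigenvalues, which is just the symmetry $\{-m\bmod h\}=\{m\bmod h\}$. Everything else is quoted directly from the Perron--Frobenius results already recalled.
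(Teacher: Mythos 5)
Your argument is correct and follows essentially the same route as the paper: factor $\det(I-zA)$ over the spectrum so that a nonzero eigenvalue $\lambda$ of multiplicity $n_\lambda$ yields a pole of $1/\det(I-zA)$ of order $n_\lambda$ at $1/\lambda$, then invoke the Perron--Frobenius description of the peripheral spectrum $\{n e^{2\pi i m/h}\}$ (simple eigenvalues, all others of strictly smaller modulus) to identify the simple poles of minimal absolute value. Your write-up is in fact slightly more careful than the paper's, e.g.\ in noting the closure of the peripheral set under reciprocation and the harmlessness of the zero eigenvalue.
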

  \begin{proof}
For any eigenvalue $\lambda$ of $A$ with algebraic multiplicity $n_\lambda$, $1-\lambda z$ is an eigenvalue of   $I-zA$ with   same algebraic multiplicity  $n_\lambda$. And if $\lambda\neq 0$,  $\frac{1}{\lambda}$  is a  pole of $\frac{1}{det(I-zA)}$ of  order $n_\lambda$. From the Perron-Frobenius result for non-negative irreducible matrices, $n$ is the Perron-Frobenius eigenvalue of $A$. So,  $\frac{1}{n}$ is a simple pole of $\frac{1}{det(I-zA)}$. As $n$ is the eigenvalue of $A$ of maximal absolute value, $\frac{1}{n}$ is the pole of $\frac{1}{det(I-zA)}$ of minimal absolute value. The same holds for $\frac{1}{n} e^{\frac{2\pi im}{h}}$.
\end{proof}
\begin{lem}\label{lem_genfn=sum}
 Let $H < F_n$ of index $d$, with Schreier graph $\tilde{X}_H$ and transition matrix $A$. Let $p_{ij}(z)$ be the generating function of the Schreier automaton, with  $i$ and $j$  the start and end states respectively. Then for $\abs{z}<\frac{1}{n}$, and every $1 \leq i \leq d$, 
 \[\sum\limits_{j=1}^{j=d}p_{ij}(z)=\sum\limits_{j=1}^{j=d}\frac{(-1)^{i+j}det(I-zA:j,i)}{det(I-zA)}= \frac{1}{1-nz}\]
\end{lem}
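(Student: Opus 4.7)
The plan is to interpret both equalities on the generating-function side in terms of counts of paths in $\tilde{X}_H$ and reduce everything to the observation that the Schreier graph is a complete deterministic automaton, so each vertex has exactly $n$ out-going edges (one per letter of $\Sigma$).

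First I would record that the middle expression of the identity is literally the content of Theorem~\ref{theo_genfn_stanley}: applying that theorem to the Schreier automaton $\tilde{X}_{Ht_j}$ (obtained from $\tilde{X}_H$ by declaring $\tilde{x}_i$ the start state and $\tilde{x}_j$ the end state) writes
\[p_{ij}(z)\;=\;\frac{(-1)^{i+j}\,\det(I-zA:j,i)}{\det(I-zA)},\]
and summing over $j$ gives the first equality for free. So the actual content to prove is the second equality $\sum_{j=1}^{d} p_{ij}(z)=\frac{1}{1-nz}$ on the disc $|z|<1/n$.

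For this, I would go back to the power-series definition $p_{ij}(z)=\sum_{k\ge 0}(A^k)_{ij}\,z^k$, where $(A^k)_{ij}$ equals the number of directed paths of length $k$ from $\tilde{x}_i$ to $\tilde{x}_j$. Since $\tilde{X}_H$ is a complete deterministic automaton on alphabet $\Sigma$ (every vertex has exactly one outgoing edge for each letter of $\Sigma$), for each vertex $\tilde{x}_i$ and each word $w\in\Sigma^{*}$ of length $k$ there is exactly one path of length $k$ starting at $\tilde{x}_i$ with label $w$, and this path terminates at a unique vertex $\tilde{x}_j$. Summing over all terminal vertices therefore counts all words of length $k$, so
\[\sum_{j=1}^{d}(A^{k})_{ij}\;=\;n^{k}\qquad(k\ge 0).\]
Equivalently, every row of $A$ sums to $n$, hence every row of $A^k$ sums to $n^k$.

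Interchanging the two summations, which is justified on $|z|<1/n$ by the common radius of convergence $1/\lambda_{PF}=1/n$ supplied by Theorem~\ref{theo_genfn_flajolet}, I get
\[\sum_{j=1}^{d}p_{ij}(z)\;=\;\sum_{k=0}^{\infty}\Bigl(\sum_{j=1}^{d}(A^{k})_{ij}\Bigr)z^{k}\;=\;\sum_{k=0}^{\infty}n^{k}z^{k}\;=\;\frac{1}{1-nz},\]
which is the desired identity. There is no serious obstacle here: Theorem~\ref{theo_genfn_stanley} handles the determinantal form, and the rest is the row-sum identity plus geometric summation inside the common disc of convergence.
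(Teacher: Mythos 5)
Your proof is correct and follows essentially the same route as the paper: the paper's one-line argument ("$F_n$ is the disjoint union of the $d$ cosets of $H$, so the generating function of $F_n$ is the sum of the coset generating functions") is exactly your observation that completeness and determinism of the Schreier automaton force $\sum_{j}(A^k)_{ij}=n^k$, just phrased in terms of cosets rather than row sums. Your version is somewhat more explicit about the determinantal identity and the justification for interchanging the sums, but no new idea is involved.
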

  \begin{proof}
The number of positive words of length $k\geq 0$ in $F_n$ is $n^k$, so the generating  function of $F_n$ is $\sum\limits_{k=0}^{k=\infty}n^kz^k=\frac{1}{1-nz}$,  for $z$ with  $\abs{z} <\frac{1}{n}$. As $F_n$ is the disjoint union of the $d$ cosets of $H$, the generating  function of $F_n$ is equal to the sum of the  generating functions corresponding to each coset of $H$.
\end{proof}

\begin{lem}\label{lem_asympto}
 Let $H < F_n$ of index $d$, with Schreier graph $\tilde{X}_H$. Let $p_{ij}(z)$ and $p_{kl}(z)$ be the generating functions of the Schreier automatons corresponding to  $i,j$ and $k,l$   respectively. Then,  $p_{ij}(z)$ and $p_{kl}(z)$ have the same radius of convergence  $\frac{1}{n}$, and  the same poles (of the same order).
\end{lem}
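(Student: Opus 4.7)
The plan is to treat the two assertions separately and to leverage the Perron--Frobenius structure of irreducible periodic non-negative matrices.

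For the radius of convergence, I would apply Theorem \ref{theo_genfn_flajolet} directly. The Schreier graph $\tilde{X}_H$ is strongly connected and $n$-regular (each vertex has in-degree and out-degree $n$), so its transition matrix $A$ is non-negative, irreducible, with Perron--Frobenius eigenvalue $\lambda_{PF}=n$. Flajolet's theorem then guarantees that every entry of $(I-zA)^{-1}$, in particular $p_{ij}(z)$ and $p_{kl}(z)$, has radius of convergence $\frac{1}{n}$.

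For the pole assertion, Theorem \ref{theo_genfn_stanley} expresses each $p_{ij}$ as a rational function with common denominator $\det(I-zA)$, and by Lemma \ref{lem_1-nz} the poles of $\frac{1}{\det(I-zA)}$ of minimum modulus are the $h$ simple poles $\frac{1}{n}e^{2\pi i m/h}$, $m=0,\ldots,h-1$. The main step is to show that none of these cancels in any entry $p_{ij}$, so that all of them appear in every $p_{ij}$ with the same simple order. To this end I would write $\omega=e^{2\pi i/h}$, partition the $d$ vertices into cyclic classes $C_0,\ldots,C_{h-1}$ with $c_k\in\{0,\ldots,h-1\}$ the class of vertex $k$, and set $D=\operatorname{diag}(\omega^{c_1},\ldots,\omega^{c_d})$. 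Since every edge of $\tilde{X}_H$ goes from some class $C_a$ to $C_{a+1\bmod h}$, one verifies $DAD^{-1}=\omega^{-1}A$. Consequently, if $v_0>0$ and $w_0>0$ are the right and left Perron eigenvectors of $A$ for eigenvalue $n$, then $v_m:=D^m v_0$ and $w_m:=w_0 D^m$ are right and left eigenvectors of $A$ for the simple eigenvalue $n\omega^m$. The residue of $(I-zA)^{-1}$ at $z=\frac{1}{n\omega^m}$ is a nonzero scalar multiple of the rank-one matrix $v_m w_m$, whose $(i,j)$-entry equals $\omega^{m(c_i+c_j)}(v_0)_i(w_0)_j\neq 0$. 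Hence each entry $p_{ij}$ has a simple pole at every $\frac{1}{n}e^{2\pi i m/h}$, independently of $(i,j)$, so $p_{ij}$ and $p_{kl}$ share the same poles (of the same simple order).

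The hard part will be verifying the diagonal-conjugation identity $DAD^{-1}=\omega^{-1}A$ and reading off the explicit form of the eigenvectors $v_m, w_m$ for the dominant eigenvalues $n\omega^m$; once that is in hand, the nonvanishing of each entry of the residue matrix is immediate from the strict positivity of $v_0$ and $w_0$, and the uniformity of the pole set across different choices of start and end states follows at once.
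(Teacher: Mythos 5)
Your treatment of the radius of convergence coincides with the paper's: both invoke Theorem \ref{theo_genfn_flajolet} together with the fact that the Schreier graph is strongly connected and $n$-regular, so that $A$ is irreducible with Perron--Frobenius eigenvalue $n$. For the poles, your route is genuinely different from the paper's and, for the dominant poles, more explicit and more convincing: the cyclic-class conjugation $DAD^{-1}=\omega^{-1}A$ is correct, and the rank-one residue matrices built from the strictly positive Perron vectors do show that every entry $p_{ij}(z)$ has a simple pole at each $\frac{1}{n}e^{2\pi i m/h}$. (A minor slip: with your convention $w_m=w_0D^m$ is a left eigenvector for $n\omega^{-m}$ rather than $n\omega^{m}$, so the $(i,j)$ entry of the residue matrix is $\omega^{m(c_i-c_j)}(v_0)_i(w_0)_j$; since $w_mv_m=w_0v_0>0$ the nonvanishing conclusion is unaffected.)

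The gap is that the lemma asserts $p_{ij}$ and $p_{kl}$ share \emph{all} their poles with the same orders, not only the $h$ poles of minimal modulus, and your argument says nothing about the poles $\frac{1}{\lambda}$ coming from eigenvalues $\lambda$ of $A$ with $0<|\lambda|<n$. For those, the Perron--Frobenius machinery gives no positivity: eigenvectors attached to non-dominant eigenvalues of an irreducible non-negative matrix can have vanishing entries, so the rank-one-residue computation does not transfer, and cancellation of $\det(I-zA:f,i)$ against a factor of $\det(I-zA)$ in Theorem \ref{theo_genfn_stanley} cannot be excluded this way. The paper handles the full claim by a different, softer argument: from $(I-zA)^{-1}=I+zA(I-zA)^{-1}=I+z(I-zA)^{-1}A$ and irreducibility, each entry is positively linearly related to every other, so all entries blow up together and at the same rate. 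You should either supply an argument of that kind for the non-dominant poles, or note explicitly that you prove only the dominant-pole case --- which is in fact the only part of the lemma used in Lemma \ref{lem_set-poles-simple} and in the proofs of the main theorems.
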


  \begin{proof}
  From Theorem \ref{theo_genfn_flajolet}, $p_{ij}(z)$ and $p_{kl}(z)$ have the same radius of convergence  $\frac{1}{n}$, since $n$ is the Perron-Frobenius eigenvalue of $A$. 
   Moreover, since $(I-zA)^{-1}=I+zA(I-zA)^{-1}$
   and $(I-zA)^{-1}=I+z(I-zA)^{-1}A$, and $A$ is irreducible, each  $p_{ij}(z)$  is positively linearly related to any other $p_{kl}(z)$.  Thus, the $p_{ij}(z)$ must all become infinite as soon as one of them does and at the same rate \cite[Ch.V]{flajolet}. Consequently,   $p_{ij}(z)$ and $p_{kl}(z)$  have the same poles (of the same order). 
\end{proof} 

 \begin{lem}\label{lem_set-poles-simple}
  Let $H < F_n$ of index $d$, with Schreier graph $\tilde{X}_H$ and transition matrix $A$ with period $h\geq 1$. Let $p_{ij}(z)$ be the generating function of the Schreier automaton, with  $i$ and $j$  the start and end states respectively. Then 
 for every $1 \leq i,j \leq d$, $\{\frac{1}{n} e^{\frac{2\pi im}{h}}\mid 0 \leq m\leq h-1\}$ is a set of simple poles of $p_{ij}(z)$ of minimal absolute value.
 
 \end{lem}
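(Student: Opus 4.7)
The plan is to combine Theorem~\ref{theo_genfn_stanley}, Lemma~\ref{lem_1-nz} and Lemma~\ref{lem_asympto} with a short rank-of-adjugate argument ruling out cancellation between numerator and denominator. Set $\zeta_m:=\tfrac{1}{n}e^{2\pi im/h}$ for $0\le m\le h-1$. By Theorem~\ref{theo_genfn_stanley}, the poles of $p_{ij}(z)$ lie among the zeros of $\det(I-zA)$, and by Lemma~\ref{lem_1-nz} the $\zeta_m$ are simple zeros of $\det(I-zA)$ of minimal absolute value. So any pole of $p_{ij}(z)$ occurring at some $\zeta_m$ is automatically simple, and the only thing to verify is that each $\zeta_m$ really does occur as a pole of every $p_{ij}(z)$, i.e.\ that the numerator does not kill it.

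To establish non-cancellation, I would use the adjugate identity $(I-zA)^{-1}=\tfrac{1}{\det(I-zA)}\operatorname{adj}(I-zA)$, whose $(i,j)$-entry is precisely $p_{ij}(z)$ (this is Cramer's rule, and matches the formula recalled in Theorem~\ref{theo_genfn_stanley}). Since $\zeta_m$ is a simple zero of $\det(I-zA)$, the corresponding eigenvalue of $A$ has algebraic multiplicity one, hence geometric multiplicity one, so $\ker(I-\zeta_m A)$ is one-dimensional and $I-\zeta_m A$ has rank exactly $d-1$. Consequently $\operatorname{adj}(I-\zeta_m A)\neq 0$, so at least one minor $\det(I-\zeta_m A:j,i)$ is nonzero, and for that particular pair $(i,j)$ the function $p_{ij}(z)$ has a genuine simple pole at $\zeta_m$.

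Finally, by Lemma~\ref{lem_asympto}, the generating functions $p_{kl}(z)$ all share the same poles with the same orders, so the simple pole at each $\zeta_m$ exhibited above propagates from one $(i,j)$ to every pair $(k,l)$. Combined with the equality of radii of convergence $\tfrac{1}{n}=|\zeta_m|$ from Theorem~\ref{theo_genfn_flajolet}, this yields that $\{\zeta_m\mid 0\le m\le h-1\}$ is, for every $p_{ij}(z)$, a set of simple poles of minimal absolute value, which is what Lemma~\ref{lem_set-poles-simple} asserts. The only delicate point in the plan is the non-cancellation step; the rank argument handles it cleanly using nothing more than the simplicity already supplied by Lemma~\ref{lem_1-nz}, and every other ingredient is a direct application of an earlier result.
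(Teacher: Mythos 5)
Your proof is correct, and its overall skeleton matches the paper's: establish that each $\zeta_m=\tfrac{1}{n}e^{2\pi im/h}$ is a pole of at least one entry of $(I-zA)^{-1}$, propagate to all entries via Lemma~\ref{lem_asympto}, and get simplicity and minimality of modulus from Lemma~\ref{lem_1-nz}. The one place where you genuinely diverge is the non-cancellation step, which is the real content of the lemma. The paper argues analytically: taking an eigenvector $v$ of $A$ for the eigenvalue $ne^{-2\pi im/h}$ and using $(I-zA)^{-1}(I-zA)v=v$, it observes that if every entry of $(I-zA)^{-1}$ stayed finite as $z\to\zeta_m$, then the left side would tend to $0$ while the right side is $v\neq\vec{0}$, a contradiction. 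You argue algebraically: since $ne^{-2\pi im/h}$ is a simple (hence geometrically simple) eigenvalue, $I-\zeta_m A$ has rank exactly $d-1$, so $\operatorname{adj}(I-\zeta_m A)\neq 0$ and some cofactor $\det(I-\zeta_m A:j,i)$ survives, giving a genuine simple pole of that $p_{ij}$. Both are valid; your adjugate-rank argument is a bit more self-contained (it avoids the limiting argument and makes the ``numerator does not vanish'' issue explicit, which the paper handles only implicitly), while the paper's eigenvector argument avoids invoking the rank of the adjugate and stays closer to the power-series definition of $p_{ij}(z)$. Either way the conclusion and the reliance on Lemmas~\ref{lem_1-nz} and~\ref{lem_asympto} are the same.
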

   \begin{proof}
 By definition,     $p_{ij}(z)=\sum\limits_{k=0}^{k=\infty}(z^kA^k)_{ij}$,  for $z$ with $\abs{z}<\frac{1}{n}$. Since $(I+zA+z^2A^2+...)(I-zA)=I$, 
   $I+zA+z^2A^2+...=(I-zA)^{-1}$ (which gives the form of  $p_{if}(z)$ given in Theorem \ref{theo_genfn_stanley}). Let $v \in \mathbb{C}^d$ be an eigenvector of $A$ with eigenvalue $n e^{\frac{2\pi i(-m)}{h}}$,  for some $ 0 \leq m\leq h-1$. So, on one hand, $(I+zA+z^2A^2+...)(I-zA)\,v=Iv=v$. On the second hand, $(I+zA+z^2A^2+...)(I-zA)\,v=(I+zA+z^2A^2+...)((I-zA)\,v)$, 
   and if $z \rightarrow \frac{1}{n} e^{\frac{2\pi im}{h}}$, then 
  $(I-zA)\,v \rightarrow (I-\frac{1}{n} e^{\frac{2\pi im}{h}}A)\,v\rightarrow 0$. By definition, $v\neq \vec{0}$, so  if,  whenever $z \rightarrow \frac{1}{n} e^{\frac{2\pi im}{h}}$, all the elements in the matrix  $(I-zA)^{-1}$ are finite,  then we get a contradiction. So, there are  $1 \leq k,l \leq d$, such that $(I-zA)^{-1}_{kl}\rightarrow \infty$, whenever $z \rightarrow \frac{1}{n} e^{\frac{2\pi im}{h}}$, that is $\{\frac{1}{n} e^{\frac{2\pi im}{h}}\mid 0 \leq m\leq h-1\}$ is a set of  poles of $p_{kl}(z)$. So, from Lemma \ref{lem_asympto},  $\{\frac{1}{n} e^{\frac{2\pi im}{h}}\mid 0 \leq m\leq h-1\}$ is a set of  poles of $p_{ij}(z)$, for every $1 \leq i,j \leq d$,  and these are 
   simple poles of minimal absolute value from Lemma \ref{lem_1-nz}.
    \end{proof}

\subsection{Proof of the main result: proofs of Theorem 1 and 2}

Let $F_n$ be the free group on $n \geq 1$ generators. Let $\{H_i\alpha_i\}_{i=1}^{i=s}$ be a coset  partition of $F_n$ with $H_i<F_n$ of index $d_i$, $\alpha_i \in F_n$, $1 \leq i \leq s$, and $1<d_1 \leq ...\leq d_s$.  Let $\tilde{X}_{i}$ denote the  Schreier  graph of $H_i$, with  transition matrix   $A_i$ of period $h_i\geq 1$, $1 \leq i\leq s$.  Let  $\tilde{X}_{H_i\alpha_i}$ denote the  Schreier  automaton  of $H_i\alpha_i$, with generating function $p_i(z)$, $1 \leq i\leq s$. We prove some properties of  the generating functions. 
\begin{lem}\label{lem_partition_genfn=sum}
Let $\abs{z}<\frac{1}{n}$. Then
\begin{enumerate}[(i)]
\item{\begin{equation}\label{eqn_partition}
\sum\limits_{i=1}^{i=s}p_{i}(z)= \frac{1}{1-nz}
\end{equation} }
\item For every $1 \leq i \leq s$, $\{\frac{1}{n} e^{\frac{2\pi im}{h_i}}\mid 0 \leq m\leq h_i-1\}$ is a set of simple poles of $p_{i}(z)$ of minimal absolute value.
\end{enumerate}
\end{lem}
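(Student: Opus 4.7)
The plan is to derive part (i) by a direct counting argument combining the coset-partition hypothesis with the definition of the Schreier automaton, and to derive part (ii) as an immediate application of Lemma \ref{lem_set-poles-simple} to each individual Schreier graph $\tilde{X}_i$.

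For (i), first recall that by Definition \ref{def_Schreier-automaton}, the Schreier automaton $\tilde{X}_{H_i\alpha_i}$ accepts exactly the positive words $\Sigma^*\cap H_i\alpha_i$; hence the coefficient $a_{i,k}$ appearing in $p_i(z)=\sum_{k\geq 0}a_{i,k}z^k$ equals $|\Sigma^k\cap H_i\alpha_i|$. The partition hypothesis $F_n=\bigsqcup_{i=1}^{s}H_i\alpha_i$ restricts to a partition of the free monoid
\[
\Sigma^*=\bigsqcup_{i=1}^{s}\bigl(\Sigma^*\cap H_i\alpha_i\bigr),
\]
which intersected with $\Sigma^k$ yields the scalar identity $n^k=\sum_{i=1}^{s}a_{i,k}$ for every $k\geq 0$. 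Multiplying by $z^k$ and summing over $k$ gives $\sum_i p_i(z)=\sum_{k\geq 0}n^kz^k=\tfrac{1}{1-nz}$. The manipulation is legal in $|z|<1/n$ because, by Lemma \ref{lem_asympto}, each $p_i(z)$ has radius of convergence $1/n$ and so converges absolutely there, and the geometric series $\sum n^kz^k$ converges absolutely in the same disk, permitting the interchange of summations.

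For (ii), note that $p_i(z)$ is, by construction, the generating function of the Schreier automaton $\tilde{X}_{H_i\alpha_i}$: its underlying graph is the Schreier graph $\tilde{X}_i$ of $H_i$ (with transition matrix $A_i$ of period $h_i$), the start state is the basepoint $\tilde{x}_1$ representing $H_i$, and the end state is the vertex representing the coset $H_i\alpha_i$. Hence $p_i(z)$ is one of the entries $p_{jk}(z)$ of the matrix $(I-zA_i)^{-1}$ in the sense of Theorem \ref{theo_genfn_stanley}. Applying Lemma \ref{lem_set-poles-simple} to the Schreier graph $\tilde{X}_i$ directly yields that $\{\tfrac{1}{n}e^{2\pi i m/h_i}\mid 0\leq m\leq h_i-1\}$ is a set of simple poles of $p_i(z)$ of minimal absolute value.

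There is no real obstacle: part (i) is pure bookkeeping once one notes that the automaton counts precisely the positive-word representatives, and part (ii) is a citation of the already-proved pole-structure lemma. The only thing to be careful about is that the generating functions $p_i(z)$ are attached to \emph{different} underlying Schreier graphs $\tilde{X}_i$ (one per subgroup $H_i$), so Lemma \ref{lem_set-poles-simple} must be invoked $s$ separate times, with parameters $(A_i,h_i)$ depending on $i$, rather than globally.
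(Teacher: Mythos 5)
Your proposal is correct and follows essentially the same route as the paper: part (i) is the counting argument that the coset partition of $F_n$ restricts to a partition of $\Sigma^*$, giving $n^k=\sum_i a_{i,k}$ and hence $\sum_i p_i(z)=\frac{1}{1-nz}$ on $\abs{z}<\frac{1}{n}$, and part (ii) is a direct citation of Lemma \ref{lem_set-poles-simple} applied to each Schreier graph $\tilde{X}_i$ separately. You merely spell out the convergence bookkeeping that the paper leaves implicit.
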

\begin{proof}
$(i)$ The  generating  function of $F_n$ is $\frac{1}{1-nz}$,  for $\abs{z} <\frac{1}{n}$. As $\{H_i\alpha_i\}_{i=1}^{i=s}$ is  a coset  partition of $F_n, $  the generating  function of $F_n$ is equal to the sum of the corresponding generating functions.$(ii)$ results from  Lemma \ref{lem_set-poles-simple}.
\end{proof}

\begin{lem}\label{lem_max-period-repeats}
Let $h>1$, where  $h=max\{h_i \mid 1 \leq i \leq s\}$.  Assume $h_k=h$. Let $J=\{j \,\mid\, 1 \leq j\leq s,\,h_j=h\}$. Then
   \begin{enumerate}[(i)]
\item  There exists (at least one) $j\neq k$ such that $\frac{1}{n}e^{\frac{2\pi i}{h}}$ is also a pole of $p_j(z)$ and  $h_j=h$.
 \item  $\sum\limits_{j\in J}Res(p_{j}(z), \frac{1}{n}e^{\frac{2\pi i}{h}})=0$ and moreover $\sum\limits_{j\in J}Res(p_{j}(z), \frac{1}{n}e^{\frac{2\pi im}{h}})=0$, for every $m$ with $gcd(m,h)=1$ .
\end{enumerate}
\end{lem}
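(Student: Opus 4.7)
The plan is to take residues on both sides of the identity $\sum_{i=1}^{s} p_i(z) = \frac{1}{1-nz}$ from Lemma \ref{lem_partition_genfn=sum}(i) at the points $z_m := \tfrac{1}{n} e^{2\pi i m/h}$ with $\gcd(m,h)=1$. By Theorem \ref{theo_genfn_stanley}, each $p_i(z)$ is rational, and $\frac{1}{1-nz}$ is rational, so the identity is an equality of rational (hence meromorphic) functions on all of $\mathbb{C}$ by analytic continuation, and in particular residues match at every point.

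The first substantive step is to identify exactly which $p_j(z)$ have $z_m$ as a pole. By Theorem \ref{theo_genfn_flajolet}, every $p_j(z)$ has radius of convergence $1/n$, and by Lemma \ref{lem_set-poles-simple} the poles of $p_j(z)$ of minimal absolute value $1/n$ are exactly the set
\[
\Bigl\{\tfrac{1}{n} e^{2\pi i \ell/h_j} \,\Bigm|\, 0 \leq \ell \leq h_j-1\Bigr\},
\]
while all remaining poles have strictly larger absolute value. Since $|z_m|=1/n$ and $\gcd(m,h)=1$, the number $e^{2\pi i m/h}$ is a primitive $h$-th root of unity. For $z_m$ to belong to the set above, this primitive $h$-th root of unity must itself be an $h_j$-th root of unity, which forces $h \mid h_j$. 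Because $h = \max\{h_i \mid 1 \leq i \leq s\}$, this is possible only when $h_j = h$, i.e.\ $j \in J$. So $z_m$ is a (simple) pole of $p_j(z)$ precisely when $j \in J$.

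Next, since $h>1$ and $m \not\equiv 0 \pmod h$, we have $z_m \neq 1/n$, so the right-hand side $\frac{1}{1-nz}$ is holomorphic at $z_m$ and its residue there vanishes. Equating residues yields
\[
\sum_{j \in J} \operatorname{Res}\bigl(p_j(z),\, z_m\bigr) \;=\; 0
\]
for every $m$ with $\gcd(m,h)=1$, which is exactly statement (ii). For (i), specialize to $m=1$. By Lemma \ref{lem_set-poles-simple}, $z_1$ is a \emph{simple} pole of $p_k(z)$, so $\operatorname{Res}(p_k(z), z_1) \neq 0$. The vanishing of the sum in (ii) then forces the existence of at least one $j \in J$ with $j \neq k$ such that $\operatorname{Res}(p_j(z), z_1) \neq 0$; in particular $z_1$ is a pole of this $p_j(z)$, and $h_j = h$ by membership in $J$.

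The only real subtlety is the root-of-unity argument that isolates $J$ as the exact set of indices contributing to the residue, and here the maximality hypothesis on $h$ is doing the essential work: without it, other periods $h_j$ that are proper multiples of $h$ could in principle produce a pole at $z_m$, and the clean identity in (ii) would fail. Everything else is bookkeeping with rational functions and the Perron--Frobenius structure already recorded in Lemmas \ref{lem_1-nz}--\ref{lem_set-poles-simple}.
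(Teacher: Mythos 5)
Your proof is correct and follows essentially the same route as the paper: the partition identity $\sum_i p_i(z)=\frac{1}{1-nz}$, the Perron--Frobenius description of the minimal-modulus poles, and the observation that maximality of $h$ forces any $p_j$ with a pole at $\frac{1}{n}e^{2\pi i m/h}$ ($\gcd(m,h)=1$) to satisfy $h_j=h$. The only cosmetic difference is that you establish the residue identity (ii) first and deduce (i) from the nonvanishing of the residue at a simple pole, whereas the paper proves (i) directly by letting $z\to\frac{1}{n}e^{2\pi i/h}$ and contrasting the divergent left-hand side with the finite right-hand side; the two arguments are interchangeable.
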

\begin{proof}
$(i)$ From Lemma \ref{lem_set-poles-simple}, 
$\{\frac{1}{n}e^{\frac{2\pi im}{h}}\mid 0 \leq m\leq h-1\}$ is a set of simple poles of $p_k(z)$.
Let $z \rightarrow\frac{1}{n}e^{\frac{2\pi i}{h}}$ in Eqn. \ref{eqn_partition}. Then $p_k(z)\rightarrow\infty$ and the left-hand side of Eqn. \ref{eqn_partition} also, while the right-hand side of Eqn. \ref{eqn_partition} is a finite number, a contradiction.   So, there exists  $j\neq k$ such that 
 $\frac{1}{n}e^{\frac{2\pi i}{h}}$ is  a simple pole of $p_{j}(z)$ which implies that $h_j=h$. \\
$(ii)$ From Lemma \ref{lem_partition_genfn=sum}$(i)$,    $\sum\limits_{i=1}^{i=s}Res(p_{i}(z), \frac{1}{n}e^{\frac{2\pi i}{h}})=Res(\frac{1}{1-nz},\frac{1}{n}e^{\frac{2\pi i}{h}})=0$. For every $i \notin J$, $Res(p_{i}(z), \frac{1}{n}e^{\frac{2\pi i}{h}})=0$, since $\frac{1}{n}e^{\frac{2\pi i}{h}}$ is not a pole, so $\sum\limits_{j\in J}Res(p_{j}(z), \frac{1}{n}e^{\frac{2\pi i}{h}})=0$. Clearly, $\sum\limits_{j\in J}Res(p_{j}(z), \frac{1}{n}e^{\frac{2\pi im}{h}})=0$, for every $m$ with $gcd(m,h)=1$.
\end{proof}
Note that using exactly the same argument as in the proof of Lemma \ref{lem_max-period-repeats}, we can prove the following.
\begin{lem}\label{lem_pole-repeats} If $\frac{1}{\lambda}$ is any pole of order $n_{\lambda}$ of  $p_k(z)$, for some  $1 \leq k \leq s$. Then there exists (at least one) $j\neq k$ such that $\frac{1}{\lambda}$ is a pole of the same order $n_{\lambda}$ of  $p_j(z)$ and for $J_\lambda=\{j \,\mid\, 1 \leq j\leq s,\,\frac{1}{\lambda}$ is a pole of $p_j(z)\, \}$,  $\sum\limits_{j\in J_\lambda}Res(p_{j}(z), \frac{1}{\lambda})=0$.
\end{lem}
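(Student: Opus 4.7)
The plan is to run the argument of Lemma \ref{lem_max-period-repeats} verbatim, with the specific pole $\frac{1}{n}e^{\frac{2\pi i}{h}}$ replaced by an arbitrary pole $\frac{1}{\lambda}$ of some $p_k(z)$. The engine is the partition identity $\sum_{i=1}^{s}p_i(z)=\frac{1}{1-nz}$ from Lemma \ref{lem_partition_genfn=sum}$(i)$: the right-hand side has its unique pole at $z=\frac{1}{n}$, so whenever $\frac{1}{\lambda}\neq\frac{1}{n}$, the right-hand side is holomorphic at $\frac{1}{\lambda}$ and the singularities contributed by the summands on the left must cancel. The case $\lambda=n$ is already covered by Lemma \ref{lem_set-poles-simple} together with Lemma \ref{lem_partition_genfn=sum}$(ii)$ and is therefore excluded from the argument below.

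For existence of a matching pole, I would let $z\to\frac{1}{\lambda}$ in the partition identity: since $p_k(z)$ diverges while $\frac{1}{1-nz}$ stays bounded, at least one other summand $p_j(z)$, $j\neq k$, must also blow up at $\frac{1}{\lambda}$, so $\frac{1}{\lambda}$ is a pole of $p_j(z)$. For the matching-order assertion, I would compare Laurent coefficients of $(z-\frac{1}{\lambda})^{-n_\lambda}$ on both sides of the partition identity: the coefficient on the right is zero since the right-hand side is analytic at $\frac{1}{\lambda}$, while the coefficient contributed by each $p_i$ is nonzero precisely when the pole of $p_i$ at $\frac{1}{\lambda}$ has order at least $n_\lambda$. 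Since $p_k$ contributes a nonzero term, some $j\neq k$ must contribute a cancelling term, and that $p_j$ must have a pole of order at least $n_\lambda$ at $\frac{1}{\lambda}$. For the residue identity, I would take residues at $\frac{1}{\lambda}$ of both sides: the right-hand side vanishes, the indices $i\notin J_\lambda$ contribute nothing, and linearity collapses the sum to $\sum_{j\in J_\lambda}Res(p_j(z),\frac{1}{\lambda})=0$.

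The chief subtlety, and the only genuine departure from the original proof, is the matching-order clause: the coefficient argument above only yields a $j\neq k$ whose pole at $\frac{1}{\lambda}$ is of order \emph{at least} $n_\lambda$, not necessarily exactly $n_\lambda$. To obtain exact matching, one applies the comparison at the maximum pole order $N$ attained among the $p_i(z)$'s at $\frac{1}{\lambda}$, and the vanishing of the corresponding leading Laurent coefficient then forces at least two indices to realize $N$. In Lemma \ref{lem_max-period-repeats} this subtlety was invisible because Lemma \ref{lem_set-poles-simple} forces the poles under consideration to be simple, so the orders match trivially; here one must argue at the level of leading Laurent coefficients rather than just residues, which is the only place where the proof mildly deviates from its model.
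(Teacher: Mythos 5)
Your argument is the same as the paper's: the source proves this lemma in one line, by declaring that it follows from ``exactly the same argument as in the proof of Lemma \ref{lem_max-period-repeats}'', i.e.\ by letting $z\to\frac{1}{\lambda}$ in the partition identity of Lemma \ref{lem_partition_genfn=sum} and cancelling the singularities of the summands against an analytic right-hand side. Your write-up is in fact more careful than the source on two points. First, you exclude $\lambda=n$; this is necessary, since at $z=\frac{1}{n}$ the right-hand side $\frac{1}{1-nz}$ is itself singular and the residues of the $p_i$ there sum to $-\frac{1}{n}$ rather than $0$, so the residue identity of the lemma cannot hold for that pole (the paper does not make this exclusion). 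Second, you correctly observe that the divergence/residue argument only produces some $j\neq k$ whose pole at $\frac{1}{\lambda}$ has order \emph{at least} $n_\lambda$, whereas the lemma asserts equality of orders.

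However, your patch for the second point does not close the gap for the statement as written. Comparing leading Laurent coefficients at the maximal order $N$ attained among the $p_i$ at $\frac{1}{\lambda}$ shows that $N$ is attained by at least two indices; but if $p_k$'s pole has order $n_\lambda<N$, this yields no second index with a pole of order exactly $n_\lambda$. Nothing in the cancellation argument alone rules out, say, orders $1,2,2$ at $\frac{1}{\lambda}$ with the level-$(-2)$ coefficients and the level-$(-1)$ coefficients cancelling separately. So the ``same order'' clause is established only for poles of maximal order. This is a defect of the paper's own one-line proof as well, not something you introduced; to repair it one must either restrict the clause to poles of maximal order, weaken it to ``order at least $n_\lambda$'', or invoke additional structural information about these particular generating functions (in the spirit of Lemmas \ref{lem_1-nz} and \ref{lem_asympto}) that pins down the orders.
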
 
Note that although, whenever $\abs{\lambda}<n$, $\frac{1}{\lambda}$ is outside the closure of the domain of convergence of the power series to the generating functions, Lemma \ref{lem_pole-repeats} may be useful. Indeed, in case $\abs{J}=2$, with $J=\{j,k\}$, $p_j(z)$ and $p_k(z)$ have necessarily the same poles, that is  the transition matrices  $A_j$ and $A_k$ have the same non-zero eigenvalues. So, $d_j-n_{j0}=d_k-n_{k0}$, where $n_{j0}$ and $n_{k0}$ denote the algebraic multiplicity of $0$ as an eigenvalue of $A_j$ and $A_k$ respectively. So,  if $A_j$ and $A_k$ are both invertible ($n_{j0}=n_{k0}=0$)
or both not invertible ($n_{j0}=n_{k0}=h$), then $d_j=d_k$.

 \begin{proof}[Proof of Theorem \ref{theo0}]
$(i)$ The proof appears in Lemma \ref{lem_max-period-repeats}.\\
 $(ii)$ If $h=h_{\ell}>1$ does not properly divide any other period $h_i$, $1 \leq i \leq s$, then $e^{\frac{2\pi i}{h}}$ is not  equal to   $e^{\frac{2\pi im}{h'}}$,  for any $m>1$ and any $h' \in \{h_i \mid 1 \leq i \leq s\}$. Using  exactly the same  argument as in  the proof of Lemma \ref{lem_max-period-repeats}, there exists  $j\neq \ell$ such that  $\frac{1}{n}e^{\frac{2\pi i}{h}}$ is a pole of $p_{j}(z)$ and $h_j=h$. \\
 $(iii)$ We prove that for every $h_k$, there exists  $j\neq k$ such that  either $h_j=h_k$ or $h_k\mid h_j$. If $h_k=1$, then clearly $h_k$  divides every period $h_i$, $1 \leq i \leq s$. Assume $h_k>1$ and assume by contradiction that $h_k$  does not divide any period $h_i$, $1 \leq i \leq s,\, i\neq k$. That is, for every $m\geq 1$,  $e^{\frac{2\pi i}{(\frac{h_i}{m})}}$ is not  equal to  $e^{\frac{2\pi im}{h_k}}$,  for every  $h_i$, $1 \leq i \leq s, \, i\neq k$. Using  the same  argument as in  the proof of Lemma \ref{lem_max-period-repeats}, with $z \rightarrow\frac{1}{n}e^{\frac{2\pi i}{h_k}}$ in Eqn. \ref{eqn_partition}, we get a contradiction. So, either there exists  $j\neq k$ such that  $h_j=h_k$ or $h_k\mid h_j$.
\end{proof}
\begin{proof}[Proof of Theorem \ref{theo1}]
$(i)$ If the period of $A_s$ is $d_s$, then $d_s$ is maximal amongst all the periods, and from Lemma \ref{lem_max-period-repeats}, there is a repetition of the maximal period, so  there exists  $j\neq s$ such that $h_j=d_s$. As $h_j\leq d_j$,  $d_j=d_s$. \\ 
$(ii)$ If the period of $A_k$ is $d_k$, then there exists $j \in J$ such that $h_j=d_k$. As $h_j\leq d_j\leq  d_k$, $d_j=d_k$.  
\end{proof}
Let $\{H_i\alpha_i\}_{i=1}^{i=s}$ be a coset  partition of $F_n$ with $H_i<F_n$ of index $d_i$, $\alpha_i \in F_n$, $1 \leq i \leq s$, and $1<d_1 \leq ...\leq d_s$.
A question that arises naturally is whether there always exists a set of $n$ generators of $F_n$ such that  there is a subgroup $H_k$ with Schreier  graph  $\tilde{X}_{k}$, such that its transition matrix $A_k$ has  period $h_k$ greater than $1$. 
A transition matrix $A_i$ is aperiodic if and only if the gcd of the length all the closed (directed) loops in $\tilde{X}_{i}$ is $1$. Every  $\tilde{X}_{i}$ is an Eulerian (directed) loop of length $nd_i$, since each vertex has its in-degree equal to its out-degree and both equal to $n$. In a trial and error approach, it is possible to take consecutive initial paths in  the Eulerian loop, such that their labelling gives a new set of generators and such that no loop of length $1$ occurs anymore. It remains to check that,  with this new labelling of edges, the gcd of the lengths all the closed (directed) loops  is not $1$. It would be interesting to construct a rigorous  algorithm that provides  a such a set of generators.

\bigskip\bigskip\noindent
{ Fabienne Chouraqui,}

\smallskip\noindent
University of Haifa at Oranim, Israel.
\smallskip\noindent
E-mail: {\tt fabienne.chouraqui@gmail.com} {\tt fchoura@sci.haifa.ac.il}


\begin{thebibliography}{40}
\bibitem{bellman}R. Bellman, \emph{Matrix Analysis},  S.I.A.M. Press, 1997. 
\bibitem{berger1}M.A. Berger, A. Felzenbaum, A.S. Fraenkel, \emph{Improvements to two results concerning systems of residue sets}, Ars. Combin. {\bf 20} (1985), 69-82.
\bibitem{berger2}M.A. Berger, A. Felzenbaum, A.S. Fraenkel, \emph{The Herzog-Sch\"onheim conjecture for finite nilpotent groups}, Canad. Math. Bull. {\bf 29}(1986),329-333.
\bibitem{berger3}M.A. Berger, A. Felzenbaum, A.S. Fraenkel, \emph{Lattice parallelotopes and disjoint covering systems}, Discrete Math. {\bf 65} (1987), 23-44.
\bibitem{berger4}M.A. Berger, A. Felzenbaum, A.S. Fraenkel, \emph{Remark on the multiplicity of a partition of a group into cosets}, Fund. Math. {\bf 128} (1987), 139-144.
\bibitem{brodie} M.A. Brodie, R.F. Chamberlain, L.C Kappe, \emph{Finite coverings by normal subgroups}, Proc. Amer. Math. Soc. {\bf 104} (1988), 669-674.
\bibitem{chou_hs} F. Chouraqui, \emph{The Herzog-Sch\"onheim conjecture for finitely generated groups}, ArXiv 1803.08301.
\bibitem{chou-space}F. Chouraqui, \emph{The space of coset partitions of $F_n$ and  Herzog-Sch\"onheim conjecture}, ArXiv 1804.11103.
\bibitem{epstein}D.B.A. Epstein, J.W. Cannon, D.F. Holt, S.V.F. Levy, M.S. Paterson, W.P. Thurston, \emph{Word Processing in Groups}, Jones and Bartlett Publishers (1992).
\bibitem{epstein-zwik}D.B.A. Epstein, A.R Iano-Fletcher, U.Zwick, \emph{Growth functions and automatic groups}, Experimental Math. {\bf 5} (1996), n.4.
\bibitem{erdos} P. Erd\H{o}s, \emph{Egy kongruenciarenslszerekr\H{o}l sz\'ol\'o probl\'em\'ar\'ol}, Matematikai  Lapok, {\bf 4} (1952),
122-128. 
\bibitem{erdos1}P. Erd\H{o}s, \emph{On integers of the form $2^k+p$ and some related problems}, Summa Brasil. Math. {\bf 2} (1950), 113-123.
\bibitem{erdos2}P. Erd\H{o}s, \emph{Problems and results in Number theory}, Recent Progress in Analytic Number Theory, vol. 1, Academic Press, London-New York,  1981, 1-13.
\bibitem{flajolet} P. Flajolet, R.Sedgewick, \emph{Analytic Combinatorics}, Cambridge University press 2009.

\bibitem{geogh}R. Geoghegan, \emph{Topological Methods in Group Theory}, Graduate Texts in Mathematics {\bf 243}, Springer-Verlag,  Berlin, Heidelberg, New York (2008).
\bibitem{ginosar}, Y. Ginosar, \emph{Tile the group}, Elem. Math. {\bf 72}, Swiss Math. Society, to appear.
\bibitem{gino-of} Y. Ginosar, O. Schnabel, \emph{Prime factorization conditions providing multiplicities in coset partitions of groups}, J. Comb. Number Theory, { \bf 3 } (2011), n.2, 75-86.

\bibitem{herzog} M. Herzog, J. Sch\"onheim, \emph{Research problem no. 9}, Canad. Math. Bull., { \bf 17 } (1974), 150.

\bibitem{korec} I. Korec, $\breve{S}$. Zn$\acute{a}$m, \emph{On disjoint covering of groups by their cosets}, Math. Slovaca, { \bf 27 } (1977), 3-7.
\bibitem{lam} T. Lam, K. Leung, \emph{On vanishing sums of roots of unity}, J. Algebra { \bf 224 } (2000), n.1, 91-109.
\bibitem{schnabel}L. Margolis, O. Schnabel, \emph{The Herzog-Sch\"onheim conjecture for small   groups and harmonic subgroups} , ArXiv 1803.03569.
\bibitem{newman} M. Newman, \emph{Roots of unity and covering sets}, Math. Ann. { \bf 191 } (1971), 279-282.
\bibitem{znam}B. Nov$\acute{a}$k, $\breve{S}$. Zn$\acute{a}$m, \emph{Disjoint covering systems}, Amer. Math. Monthly, { \bf 81 } (1974), 42-45.
 \bibitem{pin}J.E. Pin, \emph{On reversible automata}, Lecture Notes in Computer Science {\bf 583}, Springer 1992, p. 401-416.
 \bibitem{pin2}J.E. Pin, \emph{Mathematical foundations of automata theory}, https://www.irif.fr/~jep/PDF/MPRI/MPRI.pdf
 \bibitem{por1}$\breve{S}$. Porubsk$\acute{y}$, \emph{Natural exactly covering systems of congruences}, Czechoslovak Math. J. { \bf 24 } (1974), 598-606.
 \bibitem{por2}$\breve{S}$. Porubsk$\acute{y}$, \emph{Covering systems and generating functions}, Acta Arith. { \bf 26 } (1975), n.3, 223-231.
 \bibitem{por3}$\breve{S}$. Porubsk$\acute{y}$, \emph{Results and problems on covering systems of residue classes}, Mitt. Math. Sem. Giessen,  { \bf 150 } (1981).
 \bibitem{por4}$\breve{S}$. Porubsk$\acute{y}$, J. Sch\"onheim, \emph{Covering systems of Paul Erd\H{o}s. Past, present and future. Paul Erd\H{o}s and his mathematics}, J$\acute{a}$nos Bolyai Math. Soc., { \bf 11 }  (2002), 581-627.

\bibitem{sims} C.C. Sims, \emph{Computation with finitely presented groups}, Encyclopedia of Mathematics and its Applications { \bf 48 },  Cambridge University Press (1994).

\bibitem{stanley}R.P. Stanley, \emph{Enumerative Combinatorics}, Wadsworth and Brooks/Cole, Monterey, CA, 1986.
\bibitem{stilwell}J. Stillwell, \emph{Classical Topology and Combinatorial Group Theory}, Graduate Texts in Mathematics {\bf 72}, Springer-Verlag,  Berlin, Heidelberg, New York (1980).

\bibitem{sun} Z.W. Sun, \emph{Finite covers of groups by cosets or subgroups}, Internat. J. Math. { \bf 17 } (2006), n.9, 1047-1064.
\bibitem{sun2} Z.W. Sun, \emph{An improvement of the Zn$\acute{a}$m-Newman result}, Chinese Quart. J. Math. { \bf 6} (1991), n.3, 90-96.
\bibitem{sun3} Z.W. Sun, \emph{Covering  the integers by arithmetic sequences II} Trans. Amer. Math. Soc. { \bf 348} (1996),4279-4320.
\bibitem{sun-site} Z.W. Sun, \emph{Classified publications on covering systems}, $http://math.nju.edu.cn/~zwsun/Cref.pdf$.
\bibitem{tomkinson1} M.J. Tomkinson, \emph{Groups covered by abelian subgroups}, London Math. Soc. Lecture Note Ser. {\bf 121},  Cambridge Univ. Press (1986).
\bibitem{tomkinson2} M.J. Tomkinson, \emph{Groups covered by finitely many cosets or subgroups}, Comm. Algebra {\bf 15}(1987), 845-859. 
\bibitem{znam} $\breve{S}$. Zn$\acute{a}$m, \emph{On Exactly Covering Systems of Arithmetic Sequences}, Math. Ann. {\bf 180} (1969), 227-232. 
  \end{thebibliography}
\end{document}